\def\RR{\mathbb R}
\newcommand{\ones}{\mathbbm{1}}
\newcommand{\oneslm}[1]{\mathbbm{1}^{\{#1\}}}
\newcommand{\diag}[1]{\hbox{{\rm diag}}( #1 )}
\def\R#1{$(\ref{#1})$}
\def\D{\,{\rm d}}
\newcommand{\de}{\partial}
\newcommand{\Bl}[1]{B^{\{#1\}}}
\newcommand{\Clm}[1]{C^{\{#1 \}}}
\newcommand{\Dlm}[1]{D^{\{#1\}}}
\newcommand{\etalm}[1]{\eta^{\{#1\}}}
\newcommand{\zetalm}[1]{\zeta^{\{#1\}}}
\newcommand{\rl}[1]{r^{\{#1\}}}
\newcommand{\myss}[1]{s^{\{#1\}}}
\renewcommand{\aa}[1]{a^{\{ #1\}}}
\newcommand{\waa}[1]{\widehat{a}^{\{ #1\}}}
\newcommand{\bb}[1]{b^{\{ #1\}}}
\newcommand{\cc}[1]{c^{\{ #1\}}}
\renewcommand{\AA}[1]{A^{\{ #1\}}}
\newcommand{\wAA}[1]{\widehat{A}^{\{ #1\}}}
\newcommand{\QQ}[1]{Q^{\{ #1\}}}
\newcommand{\PP}[1]{P^{\{ #1\}}}
\newcommand{\QQdot}[1]{\dot{Q}^{\{ #1\}}}
\newcommand{\PPdot}[1]{\dot{P}^{\{ #1\}}}
\newcommand{\NT}{\mathrm{NT}}
\newcommand{\one}{\mathbbm{1}}
\newcommand{\LL}{\mathcal{L}}
\newtheorem{theorem}{Theorem}[section]
\newtheorem{coroll}{Corollary}[theorem]
\newtheorem{lemma}[theorem]{Lemma}
\newtheorem{prop}[theorem]{Proposition}
\newtheorem{definition}{Definition}[section]
\newenvironment{meqn}
{\arraycolsep=1.4pt
  
  \begin{array}{rcl}}
  {\end{array}}
\begin{document}
\author[A. Zanna]{Antonella Zanna$^\dag$}
\thanks{$^\dag$ Matematisk institutt, Universitetet i Bergen, Norway,
  email: \texttt{Antonella.Zanna@uib.no}}
\date{\today}
\title{Discrete Variational Methods and Symplectic Generalized Additive Runge--Kutta Methods}

\begin{abstract}
  We consider a Lagrangian system $L(q,\dot q) =
  \sum_{l=1}^{N}L^{\{l\}}(q,\dot q)$, where the $q$-variable is
  treated by a Generalized Additive Runge--Kutta (GARK)
  method. Applying the technique of discrete variations, we show how
  to construct symplectic schemes. Assuming the diagonal methods for
  the GARK given, we present some techinques for constructing the
  transition matrices. We address the problem of the order of the
  methods and discuss some semi-separable and separable problems,
  showing some interesting constructions of methods with non-square
  coefficient matrices.
\end{abstract}

\maketitle

\section{Introduction}
\label{sec:introduction}
In this paper we are interested in studying a family of variational
method that are obtained when considering a system with a
Lagrangian
\begin{displaymath}
  L(q, \dot q) = \sum_{l=1}^{N}L^{\{l\}}(q,\dot q), \qquad q \in \RR^d,
\end{displaymath}
or a Hamiltonia system, split in elementary Hamiltonian systems,
\begin{displaymath}
  H(q, p) = \sum_{l=1}^{N}H^{\{l\}}(q,p), \qquad q,p\in \RR^d,
\end{displaymath}
where each term is treated by a different method of Runge--Kutta (RK) type.
The idea of treating different terms with different methods is by no
means new. Additive Runge--Kutta (ARK) methods were introduced already
in the 80's \cite{cooper83ark} to deal with stiff ODEs, where the
stiff term would be treated by a different Runge--Kutta method than
the non-stiff term, typically using an explicit method for the
non-stiff part and an implicit one for the stiff-part. In the
mid-90's, these methods were studied in detail from the Hamiltonian
viewpoint, and order conditions and conditions for symplecticity were
established \cite{araujo97smb}. 
The use of additivity, especially in the context of DAEs was studied
in \cite{jay98spark}, and later in \cite{tanner18gark}, the latter
especially in the context of the formalism of Generalized Additive
Runge-Kutta (GARK) methods introduced in \cite{sandu15gark}. 

Parallel to the Hamiltonian approach, there is the Lagrangian approach,
popular in the  community of computational mechanics
\cite{marsden_west_2001} and optimal control \cite{ober-blobaum11dma}. In the Lagrangian setting, the action
integral is discretized by an appropriate quadrature, the variable is
replaced by an appropriate polynomial interpolant and discrete
variational equations are derived. As the discrete variational
equations are essentially the same as generating functions, the resulting 
methods are automatically symplectic. 
Recently, a splitting of the Lagrangian, where each term was treated
by a different method, was used in the context of higher order
variational integrators for dynamical systems with holonomic
constraints \cite{wenger2017caa} and in order to devise mixed order 
integrators for systems with multiple scales \cite{wenger2016vio}. The
order analysis of these methods is not straightforward.

It is known that there is an equivalence between symplectic
Partitioned RK methods and some discrete variational methods, see
\cite{hairer06gni, marsden_west_2001}, but it is not known whether more complicated
variational methods can be written in a RK-type formalism. The
advantage a RK formalism is that it would make order analysis of the
methods considerably easier, as the order analysis of RK and PRK methods is
well understood (see \cite{hairer06gni} chapter III and references therein).

In a recent paper \cite{zanna19sps}, we introduced a family of Runge--Kutta methods
of additive type for highly oscillatory problems.
The methods were derived from a variational formulation, using
different quadrature formulas for different parts of the Lagrangian. 
In this paper we identify those methods as a particular case of
partitioned symplectic GARK methods, that are, in turn, GARK methods.
The main idea is that the $q$-variables are treated by a GARK
method, thus generating a discrete Lagrangian that uses a different RK
for each part rather than the same RK for all terms. Performing discrete 
variations, we arrive to a partioned GARK method that is automatically
symplectic. The GARK framework allows us to significantly simplify the
formulation of the methods, that can be written in a ready-to-use
formalism,  and the order analysis, as only
known algebraic relations for the coefficients need to be verified.

The paper is organized as follows. We commence by reviewing the
formalism of GARK methods and the equivalence with ARK methods,
see \S~\ref{sec:gark-methods}. In Section~\ref{sec:vari-deriv} we use
GARK methods for the splitted Lagrangian to obtain variational
equations and symplectic partitioned GARK methods. In
Section~\ref{sec:gener-form-meth} we present the general form of the
methods for both Lagrangian and Hamiltonian problems, the latter split
in Hamiltonian sub-problems. In Section~\ref{sec:order} we review some
results about the order conditions for GARK methods and reformulate
them in a simple and elegant way. In section~\ref{sec:transfer} we
introduce two main techniques that can be used to construct the
transfer matrices between the different methods. The transfer matrices
constructed in this way can be used as generators for other transfer
matrices. As the transfer matrices are not unique (in facts there are
infinitely many of those), this gives the possibility of tuning the
methods to obey qualitative properties which would be hard to
obey in the classical setting. For instance, in \cite{zanna19sps} we
constructed higher order symplectic methods that were
P-stable using a diagonal method that was not P-stable. In
Section~\ref{sec:special-case} we consider the special case when the
number of splitting terms is $N=2$. We consider a semi-separable case
$L(q,\dot q)=L^{\{1\}}(q, \dot q) +  L^{\{2\}}(q)$, and a fully
separable case,  $L(q,\dot q)=L^{\{1\}}(\dot q) +  L^{\{2\}}(q)$,
together with the corresponding Hamiltonian cases. For this latter
case, we show, as an example of the new possibilites opened by this
framework, a fourth order symplectic method with three stages for $q$
and two stages for $p$, thus having rectangular coefficient matrices. The method
 is constructed from the Gauss--Legendre and Gauss--Lobatto of order
four but is implicit only in the second stage $Q_2$ for $q$.
Finally, Section~\ref{sec:concl} is devoted to some concluding remarks.

\section{GARK methods}
\label{sec:gark-methods}
A GARK method  for the problem
\begin{equation}
  \label{eq:5}
  \dot x = f(t,x) = \sum_{l=1}^N f^{\{l\}}(t,x)
\end{equation}
reads
\begin{eqnarray*}
  X^{\{l\}}_i &=& x_n + h \sum_{m=1}^N \sum_{j=1}^{\myss{m}}
                  \aa{l,m}_{i,j} f^{\{m\}}( t_j^{\{m\}}, X_j^{\{ m\}}),
                  \quad  i=1, \ldots \myss{l}, \quad l=1, \ldots, N,
  \\
  x_{n+1} &=& x_n + h \sum_{l=1}^N \sum_{j=1}^{\myss{l}} \bb{l}_j
              f^{\{l\}}(t_j^{\{l\}}, X_j^{\{j\}}),
\end{eqnarray*}
with $t_j^{\{l\}} = t_n + \cc{l}_j h$, and the corresponding
generalized Butcher tableau of coefficients
\begin{equation}
  \label{eq:gen-gark}
  \begin{array}{c|ccc}
    \cc{1}&\AA{1,1} &\cdots& \AA{1,N}\\
    \vdots & \vdots & \ddots & \vdots\\
    \cc{N} & \AA{N,1} &\cdots & \AA{N,N}\\ \hline 
          &\bb1 & \cdots & \bb{N}
  \end{array}
\end{equation}
\cite{sandu15gark}.
The diagonal blocks of the type $(\AA{l,l}, \bb{l}, \cc{l})$,
$l=1, \dots, N$, are usually chosen as some standard RK methods, while the off-diagonal
blocks $\AA{l,m}$, $l\not =m$, are coupling (or transfer)
coefficients.

\subsection{The equivalence of ARK and GARK}
The formalism of the GARK methods is equivalent to that of
Additive RK methods, and RK method, providing a unified approach. For
instance, when $N=2$,  the  ARK method
\begin{displaymath}
  \begin{array}{c|cc}
    c&\AA1&\AA2\\ \hline
      & \bb1& \bb2
  \end{array} \quad \hbox{is equivalent to the GARK method} \quad
  \begin{array}{c|cc}
    c & \AA1 & \AA2\\
    c & \AA1 & \AA2\\ \hline
       & \bb1 & \bb2
  \end{array}.
\end{displaymath}
Vice versa, the GARK method
\begin{displaymath}
  \begin{array}{c|cc}
    \cc{1} & \AA{1,1} & \AA{1,2}\\
    \cc{2} & \AA{2,1} & \AA{2,2}\\ \hline
       & \bb1 & \bb2
  \end{array}\quad \hbox{is equivalent to the ARK method} \quad
  \begin{array}{c|cc}
    \begin{bmatrix}
      \cc{1}\\
      \cc{2}
    \end{bmatrix}
     &
    \begin{bmatrix}
      \AA{1,1} & 0\\ \AA{2,1}&0
    \end{bmatrix}
    &
    \begin{bmatrix}
      0 & \AA{1,2} \\ 0 & \AA{2,2}
    \end{bmatrix} \\ \hline
     &
     \begin{bmatrix}
       \bb{1} & 0
     \end{bmatrix}
     &
     \begin{bmatrix}
       0 & \bb2
     \end{bmatrix}
  \end{array}
\end{displaymath}
The advantage of the GARK fomulation is that it clarifies the coupling
between the various methods, in addition to eliminating zero
quadrature weights in the ARK formalism, hence the analysis of
special cases.

\subsection{Partitioned Runge--Kutta methods are GARK methods}
\label{sec:prk-are-gark}
Consider a generic problem with a partitioning of the variables of the type
\begin{eqnarray*}
  \label{eq:2}
  \dot q &=& v(q,p) \\
  \dot p &=& f(q,p),
\end{eqnarray*}
and a Partitioned Runge--Kutta method $(\AA1, \bb1, \cc1)$, $(\AA2,
\bb2,\cc2)$. It is usual to choose $\bb2=\bb1=b$ and $\cc2=\cc1=c$.
Let  variable $y =[q,p]^T$ and the splitting
\begin{displaymath}
  \dot y = \mathcal{F}(y) = \mathcal{F}^{\{1\}}(y) + \mathcal{F}^{\{2\}}(y) =
  \begin{bmatrix}
    v(q,p)\\0
  \end{bmatrix} + 
\begin{bmatrix}
    0\\f(q,p)
  \end{bmatrix}.
\end{displaymath}
It is easy to see that the PRK method corresponds to the GARK method
\begin{displaymath}
  \begin{array}{c|cc}
    c& \AA1&\AA2\\
    c& \AA1 & \AA2\\ \hline
     & b & b
  \end{array}.
\end{displaymath}
In facts, by virtue of the fact that the $q$-part of $\mathcal{F}^{\{2\}}$ is
zero, one has $\QQ1_i = \QQ2_i$ for all the stages of the
methods. By a similar argument, $\PP1_i=\PP2_i$ and the statement
follows. In particular, if the problem is Hamiltonian, that is
\begin{displaymath}
  v(q,p) =\frac{\de H(q,p)}{\de p}, \qquad f(q,p) = - \frac{\de
    H(q,p)}{\de q}
\end{displaymath}
and if the PRK is symplectic, then the above GARK becomes ($\AA{1} = A$)
\begin{displaymath}
  \begin{array}{c|cc}
    c& A&\widehat A\\
    c& A & \widehat A\\ \hline
     & b & b
  \end{array}, 
\end{displaymath}
with the symplectic condition $ b_i b_j = b_i \widehat a_{i,j} + b_j a_{j,i}$.

\section{Variational derivation of Partitioned Symplectic GARK methods}
\label{sec:vari-deriv}
Assume a Lagrangian $L(q,\dot q)=\sum_{l=1}^N L^{\{l\}}(q, \dot q)$
given. We consider a discrete action approximation
\begin{equation}
  \label{eq:split_lagr}
  \int_0^hL(q, \dot q) \approx h \sum_{l=1}^N\sum_{k=1}^{\myss{l}} \bb{l}_k
  L^{\{l\}}(\QQ{l}_k, \QQdot{l}_k),
\end{equation}
where we assume that the $q$-variables are resolved by a GARK method
\begin{eqnarray}
  \label{eq:8}
  \QQ{l}_i &=& q_0 + h \sum_{m=1}^N \sum_{j=1}^{\myss{m}} \aa{l,m}_{i,j} \QQdot{m}_j, \qquad i=1,
             \ldots, \myss{l}, \quad l=1,\ldots, N\\
  q_1 &=& q_0 + h \sum_{l=1}^N \sum_{i=1}^{\myss{l}}\bb{l}_i \QQdot{l}_i,
\end{eqnarray}
corresponding to the GARK tableau for the $q$ variables,
\begin{displaymath}
  Q: \qquad
  \begin{array}{c|ccc}
    \cc1&\AA{1,1}&\cdots &\AA{1,N}\\
    \vdots & \vdots & \ddots &\vdots\\
     \cc{N}  &\AA{N,1}&\cdots &\AA{N,N}\\ \hline 
           & \bb1 & \cdots & \bb{N}
  \end{array}
\end{displaymath}
with $\bb{l}_i\not=0$ for $i=1,\ldots, \myss{l}$, $l=1,\ldots, N$.
We construct the \emph{augmented discrete Lagrangian} by taking
\begin{displaymath}
  L_\lambda(q_0, q_1) = h \sum_{l=1}^N\sum_{k=1}^{\myss{l}} \bb{l}_k
  L^{\{l\}}(\QQ{l}_k, \QQdot{l}_k) -\lambda ( q_1-q_0 - h \sum_{l=1}^N \sum_{i=1}^{\myss{l}}\bb{l}_i \QQdot{l}_i).
\end{displaymath}
The augmentation of the Lagrangian takes care of the linear dependence
in the variables. Proceding in a manner similar to the derivation of
symplectic PRK, we use the $\QQdot{m}_j$, $j = 1, \ldots,
\myss{m}, \,m=1, \ldots, N$ as the principal variables. Taking
derivative with respect to $\QQdot{m}_j$, we obtain the following $\myss1+\cdots+ \myss{N}$ conditions
\begin{equation}
  \label{eq:des}
  h \sum_{l=1}^{N}\sum_{k=1}^{\myss{l}} \bb{l}_k (\PPdot{l}_k\frac{\de
  \QQ{l}_k}{\de \QQdot{m}_j} + \PP{l}_k \frac{\de
  \QQdot{l}_k}{\de \QQdot{m}_j}) + \lambda h \bb{m}_j =0, \qquad 
    j =1,\ldots,\myss{m},\quad m=1,\ldots, N,
\end{equation}
where we have denoted
\begin{align*}
  \PPdot{l}_k = \frac{\de}{\de {q}} L^{\{l\}}(\QQ{l}_k,
  \QQdot{l}_{k}), \qquad
    \PP{l}_k = \frac{\de}{\de \dot{q}} L^{\{l\}}(\QQ{l}_k,
      \QQdot{l}_{k}).
\end{align*}
Using the fact that $\frac{\de
  \QQ{l}_k}{\de \QQdot{m}_j} = h \aa{l,m}_{k,j} I$ and substituting
into \R{eq:des}, we obtain
\begin{equation}
  \label{eq:des_final}
  h \sum_{l=1}^{N}\sum_{k=1}^{\myss{l}} \bb{l}_k (\PPdot{l}_k h \aa{l,m}
  _{k,j} ) + h^2 \bb{m}_j\PP{m}_j + \lambda h \bb{m}_j =0, \qquad 
    j =1,\ldots,\myss{m},\quad m=1,\ldots, N.
\end{equation}
The symplectic numerical method is obtained by considering the
discrete Euler equations for $L_\lambda(q_0, q_1)$
\begin{displaymath}
  \frac{\de }{\de q_0} L_\lambda(q_0, q_1) + \frac{\de}{\de q_1}
  L_\lambda(q_{-1}, q_0) =0,
\end{displaymath}
(variations are zero at the endpoint of integration) and eliminating the Lagrange multiplier $\lambda$, giving a two-step
type method in $q_{-1}, q_0,q_1$. The method can be reduced to a one-step method by using the the Legendre transform
\begin{displaymath}
  p_0 = - \frac{\de }{\de q_0} L_\lambda(q_0,q_1) , \qquad p_1 = \frac{\de}{\de q_1} L_\lambda(q_0,q_1),
\end{displaymath}
and we shall consider the latter approach.
We have
\begin{displaymath}
  \frac{\de L_{\lambda}}{\de {q_0}} = h \sum_{l=1}^N\sum_{k=1}^{\myss{l}} \bb{l}_k(\PPdot{l}_k
  \frac{\de \QQ{l}_k}{\de q_0} + \PP{l}_k \frac{\de \QQdot{l}_k}{\de q_0} )
  + \lambda (I + h \sum_{l=1}^N\sum_{k=1}^{\myss{l}}\bb{l}_k \frac{\de
    \QQdot{l}_k}{\de q_0}),
\end{displaymath}
and, taking into account that
\begin{displaymath}
  \frac{\de \QQ{l}_k}{\de q_0} = I + h \sum_{m=1}^N 
\sum_{j=1}^{\myss{m}} \aa{l,m}_{k,j} \frac{\de \QQdot{m}_j}{\de q_0},
\end{displaymath}
together with \R{eq:des_final}, we obtain
\begin{equation}
  \label{eq:p0}
  p_0 = -h \sum_{l=1}^N\sum_{k=1}^{\myss{l}} \bb{l}_k \PPdot{l}_k -\lambda . 
\end{equation}
Similarly, 
\begin{displaymath}
  \frac{\de L_{\lambda}}{\de {q_1}} = h \sum_{l=1}^N\sum_{k=1}^{\myss{l}} (\PPdot{l}_k
  \frac{\de \QQ{l}_k}{\de q_1} + \PP{l}_k \frac{\de \QQdot{l}_k}{\de q_1} )
  -\lambda I,
\end{displaymath}
and, again, expanding in terms of $\frac{\de \QQdot{l}_k}{\de q_1}$
and using \R{eq:des_final}, we obtain
\begin{displaymath}
  p_1 = -\lambda I,
\end{displaymath}
from which,
\begin{displaymath}
  p_1 = p_0 + h \sum_{l=1}^N\sum_{k=1}^{\myss{l}} \bb{l}_k \PPdot{l}_k.
  \end{displaymath}
  The internal stages for the $\PP{l}_i$s can now be retrieved
  combining \R{eq:des_final} and \R{eq:p0}:
  \begin{eqnarray}
    \PP{l}_j &=& p_0 + h \sum_{m=1}^N \sum_{k=1}^{\myss{m}}
    (\bb{m}_k- \frac{\bb{m}_{k}}{\bb{l}_j}\aa{m,l}_{k,j}) \PPdot{m}_k \quad j=1,\ldots,
                 \myss{l} \label{eq:Pjl} 
  \end{eqnarray}
thus obtaining a Partitioned Symplectic GARK method
\begin{equation}
  \label{eq:psymplecticGARK}
  Q: \quad \begin{array}{c|ccc}
             \cc1&\AA{1,1}& \cdots &\AA{1,N}\\
             \vdots & \vdots & \ddots & \vdots\\
    \cc{N}&\AA{N,1}&\cdots &\AA{N,N}\\ \hline
           & \bb1 & \cdots &\bb{N}
  \end{array}, \qquad P: \quad \begin{array}{c|ccc}
             \cc1&\wAA{1,1}& \cdots &\wAA{1,N}\\
             \vdots & \vdots & \ddots & \vdots\\
    \cc{N}&\wAA{N,1}&\cdots &\wAA{N,N}\\ \hline
           & \bb1 & \cdots &\bb{N}
  \end{array},
\end{equation}
where\footnote{For consistency, we will require $\cc{l} = \AA{l,m}
  \oneslm{m}$.}
\begin{equation}
  \label{eq:wAhat}
  \wAA{l,m} = (\oneslm{l,m} - (B^{\{l\}})^{-1}
    (\AA{m,l})^T) B^{\{m\}}, \qquad  l,m =1,\ldots, N,
  \end{equation}
 $B^{\{k\}}=\diag{\bb{k}}$ are required to be invertible, and $\oneslm{l,m}=\one_{\myss{l}\times{\myss{m}}}$ is the
 matrix of ones of dimension ${\myss{l}\times{\myss{m}}}$.
 It is easy to see that
 \begin{displaymath}
   \AA{m,l} =  (\oneslm{l,m} - (B^{\{m\}})^{-1}
    (\wAA{l,m})^T) B^{\{l\}}, \qquad  l,m =1,\ldots, N,
  \end{displaymath}
From the relation \R{eq:wAhat} above, it is clear that, once the GARK method for
the $q$-variables is chosen, the matrices $\wAA{l,m}$,
$l,m=1,\ldots,N$, exist and are uniquely defined, as long as the
weights in $\bb{l}, \bb{m}$ are all nonzero. 
Moreover the correspondence
is one to one.  As the $p$-variables are conjugate to the $q$
variables, the one-to-one relation \R{eq:wAhat}
 justifies the following definition.

 \begin{definition}[Symplectic conjugate GARK methods]
The couple of GARK methods $(\AA{l,m}, \bb{m},\cc{l})$,  $(\wAA{l,m},
\bb{m},\cc{l})$, $l,m=1,\ldots, N$, with $\wAA{l,m}$ defined as in
\R{eq:wAhat}  and
$\bb{l}_i\not=0$, $i=1,\ldots, \myss{l}$, 
 $l=1,\ldots, N$, will be called a \emph{symplectic conjugate} GARK
 method and denoted by $(\AA{l,m}, \wAA{l,m}, \bb{m}, \cc{l})$, $l,m=1,\ldots, N$.
\end{definition}
The relation \R{eq:wAhat} can be written as a generalization of the well known
symplectic condition 
  \begin{displaymath}
    \bb{l}_i\bb{m}_j = \bb{l}_i \waa{l,m}_{i,j} + \bb{m}_j
    \aa{m,l}_{j,i} \qquad \forall i=1,\ldots, \myss{l}, \quad j=1,
    \ldots, \myss{m}, \qquad \forall l,m=1, \ldots, N.
  \end{displaymath}
 Note  that the diagonal blocks of the $Q$ and $P$ variables are
 precisely symplectic conjugate PRK pairs $(\AA{l,l}, \wAA{l,l},\bb{l},\cc{l})$.
 
In what follows, we will use consistently the wide hat notation $\widehat{~~}$
to denote a matrix that is constructed using \R{eq:wAhat}.

\section{General format of the methods: the Lagrangian and Hamiltonian setting}
\label{sec:gener-form-meth}
The Symplectic Partitioned GARK methods read as follows:
\begin{equation}
    \label{eq:1}
\begin{meqn}
  \QQ{l}_i &=& \displaystyle q_0 + h \sum_{m=1}^N \sum_{j=1}^{\myss{m}} \aa{l,m}_{i,j} \QQdot{m}_j, \qquad i=1,
             \ldots, \myss{l}, \quad l=1,\ldots, N\\
  q_1 &=& \displaystyle q_0 + h \sum_{l=1}^N \sum_{i=1}^{\myss{l}}\bb{l}_i 
          \QQdot{l}_i, \\
 \PP{l}_i &=& \displaystyle p_0 + h \sum_{m=1}^N \sum_{j=1}^{\myss{m}} \waa{l,m}_{i,j}\PPdot{m}_j \quad i=1,\ldots,
                 \myss{l}, \quad l=1,\ldots, N \\
   p_1 &=& \displaystyle p_0 + h \sum_{l=1}^N\sum_{k=1}^{\myss{l}} \bb{l}_k \PPdot{l}_k.
\end{meqn}
\end{equation}
where $\PP{l} = \frac{\de }{\de \dot q} L^{\{l\}} (\QQ{l},
\QQdot{l})$ (Legendre transform) and $\wAA{l,m}$ defined as in \R{eq:wAhat}. Assuming the latter to be
invertible, we can solve for $\QQdot{l}$ to obtain
\begin{displaymath}
  \QQdot{l} = V^{\{l\}} (\QQ{l},\PP{l}),
\end{displaymath}
so that
\begin{displaymath}
  \PPdot{l} =  \frac{\de }{\de q} L^{\{l\}} (\QQ{l},
\QQdot{l}) = F^{\{l\}}(\QQ{l}, \PP{l}).
\end{displaymath}
Thus, the method can be written as a one step method $(q_0,p_0) \to
(q_1,p_1)$ in the form
\begin{equation}
    \label{eq:spgqrk}
\begin{meqn}
  \QQ{l}_i &=& \displaystyle q_0 + h \sum_{m=1}^N \sum_{j=1}^{\myss{m}}
  \aa{l,m}_{i,j} V^{\{m\}}(\QQ{m}_j, \PP{m}_j) \qquad i=1,
             \ldots, \myss{l}, \quad l=1,\ldots, N\\
  q_1 &=& \displaystyle q_0 + h \sum_{l=1}^N \sum_{i=1}^{\myss{l}}\bb{l}_i 
         V^{\{l\}} (\QQ{l}_i, \PP{l}_i) \\
 \PP{l}_i &=& \displaystyle p_0 + h \sum_{m=1}^N \sum_{j=1}^{\myss{m}} \waa{l,m}_{i,j}F^{\{m\}}(\QQ{m}_j,\PP{m}_j) \quad i=1,\ldots,
                 \myss{l}, \quad l=1,\ldots, N \\
   p_1 &=& \displaystyle p_0 + h \sum_{l=1}^N\sum_{i=1}^{\myss{l}} \bb{l}_i F^{\{l\}}(\QQ{l}_i,\PP{l}_i).
\end{meqn}
\end{equation}
Note that the above formulation yields a symplectic method in the Hamiltonian setting,
when
\begin{displaymath}
  H(q,p) =\sum_{l=1}^N H^{\{l\}} (q,p), \qquad V^{\{l\}}(q,p) = \frac{\de
  H^{\{l\}} }{\de p}(q,p), \quad F^{\{l\}}(q,p) = -\frac{\de
  H^{\{l\}} }{\de q}(q,p).
\end{displaymath}

Next, consider a partitioning of the system
\begin{displaymath}
  \dot y = \mathcal{F}(y) = \mathcal{F}^{\{1\}}(y) + \cdots + \mathcal{F}^{\{2N\}}(y) =
  \sum_{l=1}^N\begin{bmatrix}
    V^{\{l\}}(q,p)\\0
  \end{bmatrix} + \sum_{l=1}^N
\begin{bmatrix}
    0\\F^{\{l\}}(q,p)
  \end{bmatrix}.
\end{displaymath}

We consider first the case when $N=2$. We have a splitting in four
additive vector fields,
\begin{displaymath}
  \begin{bmatrix}
    V^{\{1\}}(q,p)\\0
  \end{bmatrix} + \begin{bmatrix}
    V^{\{2\}}(q,p)\\0
  \end{bmatrix}  + 
\begin{bmatrix}
    0\\F^{\{1\}}(q,p)
  \end{bmatrix} + \begin{bmatrix}
    0\\F^{\{2\}}(q,p)
  \end{bmatrix},
\end{displaymath}
where, under the assumption that $F = \nabla H$ is Hamiltonian, we
take $H(q,p) = H^{\{1\}}(q,p) + H^{\{2\}}(q,p)$, and
\begin{displaymath}
   V^{\{l\}}(q,p) = \frac{\de H^{\{l\}}(q,p)}{\de p}, 
   \qquad  F^{\{l\}}(q,p) = -\frac{\de H^{\{l\}}(q,p)}{\de p}, \quad l=1,2.
\end{displaymath}
The method \R{eq:psymplecticGARK} coefficients $\wAA{l,m}$ satisfying
\R{eq:wAhat} is then equivalent to a (symplectic) GARK method
\begin{equation}
  \label{eq:6}
  \begin{array}{c|cccc}
    \cc1&\AA{1,1}&\AA{1,2}&\wAA{1,1}&\wAA{1,2}\\
    \cc2&\AA{2,1} & \AA{2,2} & \wAA{2,1} & \wAA{2,2}\\
    \cc1&\AA{1,1}&\AA{1,2}&\wAA{1,1}&\wAA{1,2}\\
    \cc2&\AA{2,1} & \AA{2,2} & \wAA{2,1} & \wAA{2,2} \\
    \hline
     & \bb1 & \bb2 & \bb1 & \bb2
  \end{array}.
\end{equation}
Also in this scheme we have redundancy, as $\QQ{3}_i = \QQ1_i$ for $i=1,\ldots, \myss1$ and
$\QQ4_i= \QQ2_i$ for $i=1,\ldots, \myss2$, being the $3$rd, $4$th additive
terms of the $q$-vector fields zero; similarly, one has $\PP1_i = \PP3_i$,
$i=1,\ldots, \myss1$ and $\PP2_i = \PP4_i$, $i=1, \ldots, \myss2$, being
the $1$st, $2$nd part of the $p$-vector field zero.

A generic $N$-terms symplectic partitioned GARK method
\R{eq:psymplecticGARK}  can, in turn, be written as
a GARK method (with lots of redundancy) in a similar manner, by taking
$2$ copies of each set of  $q$ and $p$ variables, so that $\QQ{l+N}_i=
\QQ{l}_i$ and $\PP{l}_i=\PP{l+N}_i$, $i=1,\ldots, \myss{l}$, $l=1,\ldots,
N$, resulting in a $2N$ \emph{symplectic GARK method}
\begin{equation}
  \label{eq:varGARK}
  \begin{array}{c|cccccc}
    \cc1&\AA{1,1}&\cdots  & \AA{1,N}&\wAA{1,1}&\cdots & \wAA{1,N}\\
    \vdots &\vdots &\ddots & \vdots & \vdots &\ddots & \vdots\\
    \cc{N}&\AA{N,1} & \cdots & \AA{N,N} & \wAA{N,1} & \cdots & \wAA{N,N}\\
    \cc1&\AA{1,1}&\cdots  & \AA{1,N}&\wAA{1,1}&\cdots & \wAA{1,N}\\
    \vdots &\vdots &\ddots & \vdots & \vdots &\ddots & \vdots\\
    \cc{N}&\AA{N,1} & \cdots & \AA{N,N} & \wAA{N,1} & \cdots & \wAA{N,N}\\
    \hline
     & \bb1 & \cdots & \bb{N} & \bb1 & \cdots &\bb{N}
  \end{array}.
\end{equation}
where $\wAA{l,m}$ is defined as in \R{eq:wAhat}.
The benefit of the above formulation is that we can take advantage of
the existing order analysis already developed for GARK methods.


\section{Order conditions for GARK methods}
\label{sec:order}
In this section we consider a generic GARK method,
\begin{equation}
  \label{eq:GARK}
  \begin{array}{c|ccc}
    \cc1 & \AA{1,1} & \cdots &\AA{1,N}\\
    \vdots & \vdots & \ddots & \vdots\\
    \cc{N} & \AA{N,1}& \cdots & \AA{N,N}\\ \hline
         & \bb{1} & \cdots & \bb{N}
  \end{array}
\end{equation}
for the differential equation
\begin{displaymath}
  \dot y = \mathcal{F}(y)= \sum_{l=1}^N \mathcal{F}^{\{l\}}(y), \qquad \mathcal{F}: \RR^d \to \RR^d.
\end{displaymath}
A study of the order conditions of ARK (which are equivalent to GARK)
was carried out in \cite{araujo97smb}. The generalization to the GARK formalism was 
treated  in \cite{sandu15gark} mostly with focus on implicit-explicit methods.
A further treatment, especially with focus on DAEs and stiff systems, can be found in
\cite{tanner18gark}. All these methods use expansion in elementary
differentials and colored trees (N-trees). For completeness, we
summarize the order analysis in this section.

The order conditions for GARK methods 
can be derived in a similar
manner to those of standard RK methods. The trees that
define the order conditions are exactly those of RK methods, except
for the fact that one has to consider all the possible combinations of
colors $1, \ldots, N$ associated to each of the vector field. The set
NT of $N$-trees consists of all Butcher trees with colored
vertices. The order of a N-tree $u\in \NT$, denoted as $\rho(u)$, is
the number vertices in $u$. The empty tree is denoted as $\emptyset$
and to emphasize that a N-tree $u$ has root of color $\nu$, we will
write $u^{[\nu]}$. Note that $u^{[\nu]}=[u_1,\ldots, u_m]^{[\nu]}$, where
$u_1, \ldots, u_m$ are the non-empty N-subtrees obtained removing the
root of $u$. The set of N-trees with root $\nu$ is denoted by
$\NT_\nu$. 
As in the setting of Butcher trees, we will denote by $\sigma(u) $ the
number of symmetries of $u \in \NT$ and by $\gamma(u)$ its density,
which is defined in a recursive manner as 
\begin{eqnarray*}
  \gamma(\emptyset) &=& 1 \\
  \gamma(\tau^{[\nu]}) &=& 1, \qquad \nu = 1, \ldots, N, \\
  \gamma(u) &=& \rho(u) \gamma(u_1) \cdots \gamma(u_m), \qquad u = u^{[\nu]}=[u_1,\ldots, u_m]^{[\nu]},
\end{eqnarray*}
where $\tau^{[\nu]}$ denotes the single vertex of color $\nu$.

For each tree $u \in \NT$ there is an elementary differential
$F(u):\RR^d\to \RR^d$ associated to it.
Elementary differentials are multilinear maps and are recursively
defined for each component $i=1, \ldots, d$ of the vector field
$\mathcal{F}$ as
\begin{eqnarray*}
  F^i(\emptyset) (y)&=& y^i \\
  F^i(\tau^{[\nu]}) (y) &=& \mathcal{F}^{\{\nu\}, i}(y), \qquad \nu = 1,
                       \ldots, N \\
  F^i(u)(y) &=& \sum_{i_1, \ldots, i_m =1}^d\frac{\de^m
                \mathcal{F}^{\{\nu\}, i}}{\de y^{i_1} \cdots \de
                y^{i_m}}(y) F^{i_1}(u_1)(y) \cdots F^{i_m}(u_m)(y),
                \qquad u = u^{[\nu]}=[u_1,\ldots, u_m]^{[\nu]}.
\end{eqnarray*}
Thus, defining $\mathbf{c}:\NT \to \RR$, a mapping assigning to each
N-tree a real number, the exact solution $y(t+h)$ can be written as a formal power
expansion,
\begin{displaymath}
  y(t+h) = \mathrm{NB}(\mathbf{c}, y(t))= \sum_{u\in \NT}
  \frac{h^{\rho(u)}}{\sigma(u)} \mathbf{c}(u) F(u) (y(t)),
\end{displaymath}
with 
\begin{displaymath}
  \mathbf{c}(u) = \frac{1}{\gamma(u)}.
\end{displaymath}

A similar expansion holds for the numerical method,
\begin{displaymath}
  y_{n+1} = \mathrm{NB}(\mathbf{d}, y_n).
\end{displaymath}
Thus: 
\begin{theorem}[Order of GARK methods, \cite{sandu15gark}]
  \label{th:order_cond}
  A GARK method is of order $r$ iff
  \begin{displaymath}
    \mathbf{d}(u) = \frac1{\gamma(u)} 
  \end{displaymath}
  for all $u \in \NT$ with $1\leq \rho(u) \leq r$.
\end{theorem}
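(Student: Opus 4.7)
The plan is to follow the classical B-series proof of Runge--Kutta order conditions, transported to the colored N-tree setting: expand both the exact solution $y(t_n+h)$ and the numerical step $y_{n+1}$ as formal power series indexed by $\NT$, read off the coefficient of each elementary differential $F(u)$, and compare. The linear independence of elementary differentials, as $\mathcal{F}^{\{1\}}, \ldots, \mathcal{F}^{\{N\}}$ range over sufficiently generic smooth vector fields, forces the coefficients of the two series to agree term-by-term.

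First I would establish that $y(t_n+h) = \mathrm{NB}(\mathbf{c}, y(t_n))$ with $\mathbf{c}(u) = 1/\gamma(u)$. Differentiating $\dot y = \sum_{\nu=1}^N \mathcal{F}^{\{\nu\}}(y)$ repeatedly and applying the chain rule produces sums of terms of the form $(\de^m \mathcal{F}^{\{\nu\}}/\de y^{i_1}\cdots \de y^{i_m})\, \mathcal{F}^{\{\mu_1\},i_1}\cdots \mathcal{F}^{\{\mu_m\},i_m}$, which are exactly the elementary differentials attached to trees of the form $[u_1,\ldots, u_m]^{[\nu]}$. A standard induction on $\rho(u)$, using the recursion $\gamma(u) = \rho(u)\gamma(u_1)\cdots\gamma(u_m)$ together with the symmetry-factor bookkeeping of $\sigma(u)$, shows that the coefficient of $F(u)(y(t_n))$ in $h^{\rho(u)}$ of the Taylor series is exactly $1/(\sigma(u)\gamma(u))$, giving the claim for the exact flow.

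Next I would expand the numerical solution. Inserting the ansatz $\QQ{l}_i = q_0 + \sum_{u\in\NT} \frac{h^{\rho(u)}}{\sigma(u)}\,\Phi_i^{\{l\}}(u)\,F(u)(y_n)$ into the stage equations and expanding $\mathcal{F}^{\{\nu\}}$ around $y_n$ by multilinearity of elementary differentials, matching coefficients of $F(u)$ for $u = [u_1,\ldots,u_m]^{[\nu]}$ yields the recursion
\begin{displaymath}
\Phi_i^{\{l\}}(u) \;=\; \sum_{j=1}^{\myss{\nu}} \aa{l,\nu}_{i,j}\, \Phi_j^{\{\nu\}}(u_1)\cdots \Phi_j^{\{\nu\}}(u_m).
\end{displaymath}
Feeding this into the update $x_{n+1} = x_n + h\sum_{l,i} \bb{l}_i \mathcal{F}^{\{l\}}(\QQ{l}_i)$ and reading off the coefficient of $F(u)/\sigma(u)$ gives the standard colored elementary weight $\mathbf{d}(u) = \sum_{i=1}^{\myss{\nu}} \bb{\nu}_i \prod_{k=1}^m \Phi_i^{\{\nu\}}(u_k)$, so that $y_{n+1} = \mathrm{NB}(\mathbf{d}, y_n)$.

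Comparing the two B-series and invoking Butcher's linear independence of elementary differentials (applied color by color), a local error of size $O(h^{r+1})$ is equivalent to $\mathbf{d}(u) = 1/\gamma(u)$ for every $u\in\NT$ with $1\le \rho(u)\le r$; the usual Lady Windermere's fan argument then upgrades local order $r+1$ to global order $r$. The main obstacle is the combinatorial bookkeeping for colored trees: one must verify that children of a vertex are identified as rooted-colored-tree isomorphism classes (so that $\sigma(u)$ picks up the correct multiplicities when two siblings carry isomorphic but differently-colored subtrees) and that the factorial produced by Taylor expansion of $\mathcal{F}^{\{\nu\}}$ cancels correctly against this $\sigma(u)$. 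This is tedious but routine: I would follow the uncolored treatment in \cite{hairer06gni} chapter III essentially verbatim, pointing out only the places where the color label refines the notion of isomorphism, and appeal to \cite{sandu15gark} for the fully worked-out colored version.
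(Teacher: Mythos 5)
Your proposal is correct and follows essentially the same route as the paper: both expand the exact flow and the numerical step as NB-series over colored trees, derive the recursive elementary weights for the stages (your $\Phi_i^{\{l\}}$ recursion is the one-step unfolding of the paper's $\mathbf{d}_i^{\{l\}}$/$\mathbf{g}_j^{\{m\}}$ recursion), and conclude by independence of elementary differentials, deferring the combinatorial bookkeeping to \cite{sandu15gark} and the uncolored treatment in \cite{hairer06gni}. The only cosmetic difference is that the paper additionally repackages the weights as the vectors $\mathcal{A}^{[u]}$ in the style of \cite{bornemann01rkm}, which is a notational convenience rather than a different argument.
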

We sketch the main moments of the proof. The mapping $\mathbf{d}: \NT
\to \RR$ depends on the GARK method and 
is also defined in a recursive manner using the internal stages of the
method.
One has
\begin{eqnarray*}
  Y_i^{\{l\}}& =& \mathrm{NB} (\mathbf{d}_i^{\{l\}}, y_n) \\
 h \mathcal{F}^{\{m\} } (Y^{\{m\}}_i) &=&
                                          \mathrm{NB}(\mathbf{g}_i^{\{m\}}, y_n),
\end{eqnarray*}
hence
\begin{displaymath}
  \mathbf{d}(u) = \sum_{l=1}^N \sum_{i=1}^{\myss{l}} \bb{l}_i
  \mathbf{g}_i^{\{l\}} (u), \qquad u \in \NT-\{\emptyset\},
\end{displaymath}
and, similarly,
\begin{displaymath}
  \mathbf{d}_i^{\{l\}}(u) = \sum_{m=1}^N \sum_{j=1}^{\myss{m}}
  \aa{l,m}_{i,j} \mathbf{g}_j^{\{m\}} (u).
\end{displaymath}
Using these in a recursive manner, one finds that, for $u = [u_1,
\ldots, u_l]^{[\nu]}$, 
\begin{displaymath}
  \mathbf{g}_i^{\{m\}} (u) = \delta_{\nu,m} \sum_{n_1, \ldots, n_l}
  \sum_{j_1, \ldots, j_l} \aa{m,n_1}_{i,j_1} \cdots \aa{m,n_l}_{i,
    j_l} \mathbf{g}_{j_1}^{\{ n_1\}}(u_1) \cdots \mathbf{g}_{j_l}^{\{n_l\}}(u_l),
\end{displaymath}
where $\delta_{\nu, m} = 1$ for $\nu=m$ and zero otherwise, implying
that $\mathbf{g}_i^{\{m\}} (u) =0$ whenever $u
=\emptyset$ is the empty tree or $u$ has root $\nu\not=m$. When $u =
\tau^{[m]}$, then $\mathbf{g}_i^{\{m\}} (u) =1$.

In this paper, we proceed in a manner similar to
\cite{bornemann01rkm}. We introduce the 
following notation:
\begin{eqnarray}
  \mathcal{A}^{[u]} &=& \ones^{\{l\}} \qquad \hbox{for } u =
                        \tau^{[l]}, \label{eq:Ataul}\\
  \mathcal{A}^{[u]} &=& (\AA{l, \nu_1} \mathcal{A}^{[u_1]})\odot
                          \cdots \odot (\AA{l, \nu_m}
                          \mathcal{A}^{[u_m]})\qquad \hbox{for }
                        u=[u_1, \ldots, u_m]^{[l]}
                        \label{eq:Au}
\end{eqnarray}
where  $\nu_1, \ldots, \nu_m$ are the root
colors of the subtrees $u_1, \ldots, u_m$ respectively and $\odot$ is
the componentwise vector multiplication.
Thus, if $ u=[u_1, \ldots, u_m]^{[l]}$, then
\begin{displaymath}
  \mathbf{d}(u) = \sum_{i=1}^{\myss{l}} \bb{l}_i \mathcal{A}^{[u]}_i
  = {\bb{l}}^T \mathcal{A}^{[u]}.
\end{displaymath}
Denoting by $\NT_l$ the set of N-trees with root $l$ and matching for
all the roots $l\in \{1, \ldots, N\}$, the numerical method can be written as
\begin{displaymath}
  y_{n+1} = y_n + \sum_{k=1}^r h^k \sum_{l=1}^N \sum_{\substack{
      u \in \NT_l\\\rho(u)=k}}\frac{1}{\sigma(u)}
     {\bb{l}}^T \mathcal{A}^{[u]} F(u)  + O(h^{r+1}). 
   \end{displaymath}
Performing a similar ordering for the trees in the exact solution, the
order conditions can be written elegantly as
     \begin{equation}
       \label{eq:bA}
       {\bb{l}}^T \mathcal{A}^{[u]} =\frac{1}{\gamma(u)}, \qquad  u\in
     \NT_{l}, \quad l = 1, \ldots, N, 
     \end{equation}
   for trees of order $\rho(u)=1, \ldots, r$.

   \begin{table}[ht]
     \centering
     \begin{tabular}{|c|c|c|c|l|}
       \hline 
       RK-tree $u$ & Order $\rho(u)$ &$\gamma(u)$& GARK order condition &
                                                            $\in\{1,\ldots, N\}$ \\ \hline
        $\ab$   & 1      & 1& $ {\bb{l}}^T\ones^{\{l\}} =1$ & $\forall
                                                          l$\\[5pt] \hline
       $\aabb$ & 2     & 2 & ${\bb{l}}^T \AA{l, m} \ones^{\{m\}}=\frac12$ & $\forall
                                                          l,m$\\[5pt] \hline
       $\aababb$& 3  & 3 & ${\bb{l}}^T ((\AA{l,m}\ones^{\{m\}})
                           \odot
                           (\AA{l,n}\ones^{\{n\}}))=\frac13$
                & $\forall l,m,n$\\[5pt] \hline
       $\aaabbb$ & 3 & 6 & ${\bb{l}}^T \AA{l,m} \AA{m,n}
                           \ones^{\{n\}} = \frac16$ & $\forall
                                                           l,m,n$\\[5pt]
       \hline
       $\aabababb$ & 4 & 4& ${\bb{l}}^T ((\AA{l,m}\ones^{\{m\}})
                           \odot
                           (\AA{l,n}\ones^{\{n\}})\odot  (\AA{l,u}\ones^{\{u\}}))=\frac14$
                & $\forall l,m,n,u$\\[5pt] \hline
       $\aaabbabb$ & 4& 8 & ${\bb{l}}^T((\AA{l,m}
                            \AA{m,n}\ones^{\{n\}})\odot
                            (\AA{l,u}\ones^{\{u\}}))=\frac18$ & $\forall l,m,n,u$\\[5pt] \hline
       $\aaababbb $ & 4 & 12 & ${\bb{l}}^T
                               \AA{l,m}((\AA{m,n}\ones^{\{n\}})\odot (\AA{m,u}\ones^{\{u\}}))=\frac1{12}$&$\forall l,m,n,u$\\[5pt] \hline
       $\aaaabbbb$ & 4 & 24 & ${\bb{l}}^T
                              \AA{l,m}\AA{m,n}\AA{n,u}
                \ones^{\{u\}}=\frac1{24}$& $\forall l,m,n,u$\\[5pt] \hline
       
     \end{tabular}
     \caption{Order conditions for GARK methods up to order 4. The
       symbol $\odot$ denotes componentwise vector multiplication.}
   \end{table}

We note that if the diagonal methods $(\AA{l,l},
\bb{l},\cc{l})$ have order $r^{\{l\}}$, then the corresponding order
conditions for $l=m=n=u=\ldots$ (all the indices equal) are
satisfied up to order $r^{\{l\}}$, as they are the same as the
underlying order conditions for the RK method.
Moreover, for consistency, it is also reasonable to require that
\begin{equation}
  \label{eq:Aone}
  \AA{l,m} \ones^{\{m\}}= \cc{l},\qquad \forall l,m.
\end{equation}
This condition is automatically satisfied for $m=l$, as long as the
underlying RK method $(\AA{l,l},\bb{l},\cc{l})$ is consistent. Using
the consistency condition \R{eq:Aone}, one recovers exactly the order
condition listed in \cite{sandu15gark}.

\begin{definition}[Simplifying conditions for GARK methods,
  \cite{tanner18gark}]
  \label{def:symplifying GARK}
  Simplifying conditions for GARK methods: for $l,m=1, \ldots N$,
  where $N$ is the number of methods,
  \begin{align}
    \label{eq:B}
    B^{\{l\}}(\rl{l}) :\qquad & \sum_{i=1}^{\myss{l}} \bb{l}_i( \cc{l}_i)^{k-1} = \frac 1k,
                        \qquad k=1, \ldots, r^{\{l\}}
    \\
    \label{eq:C}
    C^{\{l,m\}} (\etalm{l,m}): \qquad &\sum_{j=1}^{\myss{m}} \aa{l,m}_{i,j} (\cc{m}_j)^{k-1} =
    \frac{(\cc{l}_i)^k}k, \qquad i=1, \ldots, \myss{l}, \quad k = 1, \ldots,\etalm{l,m}.
    \\
    \label{eq:D}
    D^{\{l,m\}}(\zetalm{l,m}): \qquad & \sum_{i=1}^{\myss{l}} \bb{l}_i (\cc{l}_i)^{k-1} \aa{l,m}_{i,j} =
    \frac{\bb{m}_j}k (1-(\cc{m}_j)^k), \qquad j = 1, \ldots, \myss{m}, \quad k =1,
    \ldots, \zetalm{l,m}.
  \end{align}
\end{definition}
Note that $ C^{\{l,m\}} (1)$ is the same as \R{eq:Aone}, required for consistency.
The above conditions are generalization of the corresponding $B,C,D$
conditions for RK methods, which are recovered when $N=1$.
The $B$ condition \R{eq:B} implies that that the quadrature formula
with weights $b_i^{\{l\}}$ and nodes $c_i^{\{l\}}$ has order $p^{\{
  l\}}$ and, provided that the 
diagonal methods are consistent so that \R{eq:Aone} holds, it is
equivalent to state that the order conditions for the bushy trees like $u=\aabababb$ with $k$ vertices ($\gamma(u)=k$) are
automatically satisfied up to $p^{\{ l\}}$. The $C$ condition \R{eq:C} is related to the
notion of the \emph{stage order} of the method, that is the order of
approximation at the internal stages. The $D$ condition \R{eq:D} is a
simplifying condition, that guarantees the order conditions for
trees of type $u=\aaababababbb$ are also satisfied.

The theorem below generalizes an important theorem due to Butcher, who
used the RK simplifying assumptions $B,C,D$ to 
obtain an estimate of the order of the underlying method.

\begin{theorem}[Simplifying Assumption Theorem, \cite{tanner18gark}] 
  \label{th:orderGARK}
  If $\Bl{l}(\rl{l})$, $\Clm{l,m}(\etalm{l,m})$,
  $\Dlm{l,m}(\zetalm{l,m})$ are satisfied for $l,m=1,\ldots,N$, then the
  order of the GARK method is at least
  \begin{displaymath}
    \min\{r, 2\eta+2, \xi+1, \eta+\zeta+1\},
  \end{displaymath}
  where $r = \min_{l} \{\rl{l}\}$,  $\eta = \min_{l,m}
  \{\etalm{l,m}\}$, $\zeta=\min_{l,m} \{\zetalm{l,m}\}$ and $\xi =
  \min_{l,m} \{ \etalm{l,m}+\zetalm{l,m}\}$.
\end{theorem}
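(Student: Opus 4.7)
The plan is to generalize Butcher's classical argument for RK simplifying assumptions to the colored-tree setting. Using the tree-indexed formulation \R{eq:bA}, I would show by induction on the structure of $u\in \NT_l$ that ${\bb{l}}^T \mathcal{A}^{[u]} = 1/\gamma(u)$ whenever $\rho(u) \leq P := \min\{r, 2\eta+2, \xi+1, \eta+\zeta+1\}$. The workhorses are two vector identities read off directly from Definition~\ref{def:symplifying GARK}: the $C$-condition $\Clm{l,m}(\etalm{l,m})$ says that $\AA{l,m}$ applied to $(\cc{m})^{\odot(k-1)}$ returns $(\cc{l})^{\odot k}/k$ whenever $k\leq \etalm{l,m}$, which lets one collapse any tall subtree with root color $m$ of height up to $\eta$ into a power of $\cc{l}$ at its parent; dually, $\Dlm{l,m}(\zetalm{l,m})$ lets one absorb a left factor $(\cc{l})^{\odot(k-1)}\AA{l,m}$ into the weight $\bb{m}$, effectively contracting a chain of length up to $\zeta$ emanating from the root.

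With these tools I would perform a tree decomposition. Given $u=[u_1,\dots,u_m]^{[l]}$, apply $\Clm{l,\nu_k}$ to each subtree $u_k$ that is a linear chain of height at most $\etalm{l,\nu_k}$; for subtrees that are themselves bushy but with short chains, apply $C$ recursively from the leaves inward. What remains after peeling is a bush sitting at the root, to which $\Bl{l}(\rl{l})$ applies and yields the correct monomial moment. The four bounds in the statement correspond to the regimes in which the peeling terminates. The pure $B$ bound gives order up to $r=\min_l\rl{l}$ when no reduction by $C$ or $D$ is needed. Collapsing both sides of an internal edge by $C$ gives $2\eta+2$ (a chain of length $\eta$ on each side, plus the two endpoint vertices). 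Using $C$ above and $D$ below a distinguished edge gives the two mixed bounds: when the endpoints are forced to share a single color pair $(l,m)$ one obtains $\xi+1$ with $\xi=\min_{l,m}(\etalm{l,m}+\zetalm{l,m})$, and when the edge reductions come from independent color pairs one obtains the potentially sharper $\eta+\zeta+1$. Reassembling, the density identity $\gamma(u)=\rho(u)\,\gamma(u_1)\cdots\gamma(u_m)$ reproduces $1/\gamma(u)$ exactly, matching the required order condition.

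The main obstacle I expect is bookkeeping across the colors. Since $\Clm{l,m}$ and $\Dlm{l,m}$ are indexed by the pair of colors at a parent--child edge, every application of $C$ or $D$ consumes a different index bound, and one must check that along every root-to-leaf path the accumulated depth is uniformly controlled by $\eta$ and $\zeta$ (or by $\xi$, for the color-matched case). A secondary issue is verifying that $\Clm{l,m}(1)$, which is just the consistency condition $\AA{l,m}\oneslm{m}=\cc{l}$, is already being used implicitly in the recursion \R{eq:Au}, so it need not be invoked as a separate step. Once the colored indexing is tracked cleanly, the argument reduces to the colored analogue of Butcher's classical proof and the bound follows.
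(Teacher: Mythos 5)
First, a point of comparison: the paper does not prove this theorem. It is imported from \cite{tanner18gark} with only the remark that the passage from $N=2$ to general $N$ is immediate, so there is no in-paper argument to measure your proposal against; the relevant benchmark is Butcher's classical proof for RK methods, which the cited reference adapts to colored trees. Your outline does follow that route and the overall strategy (reduce trees by $C$ from the leaves and by $D$ at the root, finish with $B$ on the surviving bushy trees, track colors along each edge) is the correct one. As written, however, it is an announcement of the strategy rather than an execution of it, and two of the steps you gloss over are where the actual content lies.

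The first gap is the $D$-reduction. You describe $\Dlm{l,m}(\zetalm{l,m})$ as ``absorbing'' a factor into the weight, but the right-hand side of \R{eq:D} is $\frac{\bb{m}_j}{k}\,(1-(\cc{m}_j)^k)$, not $\frac{\bb{m}_j}{k}$. Applying $D$ at the root therefore does not contract an edge cleanly: it replaces the order condition for $u$ by a linear combination of conditions for \emph{two} lower-order trees (one with the root chain deleted, one with the chain deleted and a bush of $k$ extra leaves grafted onto the new root), and the root color changes from $l$ to $m$ in the process. The induction has to be organized so that both derived trees are already covered, and one must verify that the resulting identity reproduces $1/\gamma(u)$ via $\gamma(u)=\rho(u)\gamma(u_1)\cdots\gamma(u_m)$; your sketch never confronts this, and it is precisely where the bounds $\eta+\zeta+1$ and $\xi+1$ originate. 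The second gap is the case analysis for the four bounds, which is asserted rather than derived: you need a precise characterization of which colored trees are irreducible under the $C$- and $D$-reductions and a check that every tree of order up to the claimed $P$ does reduce to a $B$-tree. In particular, your ``color-matched versus independent pairs'' explanation of $\xi+1$ versus $\eta+\zeta+1$ does not hold up as stated, since $\etalm{l,m}\geq\eta$ and $\zetalm{l,m}\geq\zeta$ for every pair force $\xi\geq\eta+\zeta$; the argument must identify the reduction step in which the pair-coupled quantity $\etalm{l,m}+\zetalm{l,m}$, rather than the two independent minima, controls termination. A minor correction: $\Clm{l,m}(1)$, i.e.\ \R{eq:Aone}, is not implicit in the recursion \R{eq:Au} --- that recursion involves only the coefficient matrices and never mentions $\cc{l}$ --- so it must be invoked as a hypothesis; it is simply subsumed in $\Clm{l,m}(\etalm{l,m})$ for $\etalm{l,m}\geq 1$. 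None of this says your approach is wrong --- it is the one carried out in \cite{tanner18gark} --- but the proposal as it stands leaves the load-bearing steps unproved.
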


Definition~\R{def:symplifying GARK} and Theorem~\R{th:orderGARK} are
originally stated for $N=2$ but extention to a general $N$-terms case
is immediate.

\section{Some techniques to construct the transfer matrices
  $\AA{l,m}$, for $l\not=m$}
\label{sec:transfer}
As the variational methods discussed in this paper are a more general
case of the methods proposed in \cite{zanna19sps},  we generalize the
approach in \cite{zanna19sps} for the construction of the coefficients and
use rather the order conditions to establish  the order of the resulting GARK method.

Assume the primary (diagonal) methods, $(\AA{m,m},\bb{m},\cc{m})$,
$m=1,\ldots, N$, are given. Denote by
$\LL_k^{\{m\}}(t)=\prod_{j=1, j\not=k}^{\myss{m}}\frac{t-\cc{m}_j}{\cc{m}_k-\cc{m}_j}$
the $k$th Lagrange interpolating polynomial based on 
the $\cc{m}$ nodes that we assume to be distinct. We wish to construct
$\AA{l,m}$, corresponding to the $\cc{l}$ nodes (also distinct).

\subsection{Collocation}
\label{sec:coll}
An obvious choice of the coefficients is by
a technique similar to \emph{collocation}, i.e.\ by taking
\begin{equation}
  \label{eq:coll}
  \aa{l,m}_{i,j} = \int_{0}^{\cc{l}_i} \LL^{\{m\}}_j (\tau) \D
  \tau,\qquad l\not=m.
\end{equation}
If $\cc{l}=\cc{m}$ then $\AA{l,m}$ is the matrix of the collocation RK
method based on the quadrature $(\bb{l},\cc{l})$. However, in general, $\AA{l,m} \not=
\AA{m,m}$, unless $(\AA{m,m},\bb{m},\cc{m})$ is itself a collocation
RK method.

This collocation construction is equivalent to the construction for some SPARK
methods in \cite{jay98spark} and GARK methods in \cite{tanner18gark} developed
in the DAEs context.

\textbf{Example:} Consider $\AA{1,1}$ to be a Gauss-Legendre RK and
$\AA{2,2}$ to be a Lobatto IIIA method, both of order 4. The
corresponding GARK based on collocation is:
\begin{displaymath}
  \begin{array}{c|cccccc}
    \frac12-\frac16{\sqrt{3}}&\frac14&\frac14-\frac16\sqrt3&&\frac16-\frac1{108}\sqrt3&\frac13+\frac4{27}\sqrt3& \frac1{108}\sqrt3\\[2pt]
    \frac12+\frac16{\sqrt{3}}& \frac14+\frac16\sqrt3 & \frac14&& \frac16+\frac1{108}\sqrt3&\frac13-\frac4{27}\sqrt3& -\frac1{108}\sqrt3\\[8pt]
    0 & 0&0 &&0 & 0 & 0\\[2pt]
    \frac12&\frac14+\frac18\sqrt3&\frac14-\frac18\sqrt3&&\frac5{24} & \frac13 & -\frac1{24}\\[2pt]
    1 & \frac12&\frac12&&\frac16 & \frac23 & \frac16\\[2pt]
    \hline
    & \frac12 & \frac12&& \frac16 &\frac23 & \frac16
  \end{array}
\end{displaymath}
Its symplectic conjugate methods is 
\begin{displaymath}
\begin{array}{c|cccccc}
    \frac12-\frac16{\sqrt{3}}&\frac14&\frac14-\frac16\sqrt3&&\frac16&\frac13-\frac1{6}\sqrt3& 0\\[2pt]
    \frac12+\frac16{\sqrt{3}}& \frac14+\frac16\sqrt3 & \frac14&& \frac16&\frac13+\frac1{6}\sqrt3& 0\\[8pt]
    0 & \frac1{36}\sqrt3&-\frac1{36}\sqrt3 &&\frac16 & -\frac16 & 0\\[2pt]
    \frac12&\frac14+\frac19\sqrt3&\frac14-\frac19\sqrt3&&\frac16 & \frac13 & 0\\[2pt]
    1 & \frac12&\frac12&&\frac16 & \frac56 & 0\\[2pt]
    \hline
    & \frac12 & \frac12&& \frac16 &\frac23 & \frac16
\end{array}
\end{displaymath}

\subsection{Interpolation}
\label{sec:interp}
Another choice is \emph{interpolation} for the $l$-variables using an
interpolating polynomial based on the method $\AA{m,m}$ for the ${m}$
variables. Interpolating the results given by the $m$ method on the
$\cc{l}$ nodes gives
\begin{equation}
  \label{eq:interp}
  \AA{l,m} = \LL^{\{m\}}(\cc{l}) \AA{m,m}, 
\end{equation}
where $\LL^{\{m\}}(\cc{l})$ is the $\myss{l} \times \myss{m}$ matrix with element $(i,j)$ given
as $\LL_j^{\{m\}}(\cc{l}_i)$. If $\cc{l}=\cc{m}$,
$\LL^{\{m\}}(\cc{l})=I$, and $\AA{l,m} = \AA{m,m}$.

\textbf{Example:} Let $\AA{1,1}, \AA{2,2}$ as above. The
corresponding GARK based on interpolation is:
\begin{displaymath}
  \begin{array}{c|cccccc}
    \frac12-\frac16{\sqrt{3}}&\frac14&\frac14-\frac16\sqrt3&&\frac16-\frac1{36}\sqrt3&\frac13-\frac1{9}\sqrt3& -\frac1{36}\sqrt3\\[2pt]
    \frac12+\frac16{\sqrt{3}}& \frac14+\frac16\sqrt3 & \frac14&& \frac16+\frac1{36}\sqrt3&\frac13+\frac1{9}\sqrt3& \frac1{36}\sqrt3\\[8pt]
    0 & \frac1{12}\sqrt3&-\frac1{12}\sqrt3 &&0 & 0 & 0\\[2pt]
    \frac12&\frac14+\frac1{12}\sqrt3&\frac14-\frac1{12}\sqrt3&&\frac5{24} & \frac13 & -\frac1{24}\\[2pt]
    1 & \frac12+\frac1{12}\sqrt3&\frac12-\frac1{12}\sqrt{3}&&\frac16 & \frac23 & \frac16\\[2pt]
    \hline
    & \frac12 & \frac12&& \frac16 &\frac23 & \frac16
  \end{array}
\end{displaymath}
Similarly, its symplectic conjugate is
\begin{displaymath}
  \begin{array}{c|cccccc}
    \frac12-\frac16{\sqrt{3}}&\frac14&\frac14-\frac16\sqrt3&&\frac16-\frac1{36}\sqrt3&\frac13-\frac1{9}\sqrt3& -\frac1{36}\sqrt3\\[2pt]
    \frac12+\frac16{\sqrt{3}}& \frac14+\frac16\sqrt3 & \frac14&& \frac16+\frac1{36}\sqrt3&\frac13+\frac1{9}\sqrt3& \frac1{36}\sqrt3\\[8pt]
    0 & \frac1{12}\sqrt3&-\frac1{12}\sqrt3 &&\frac16 & -\frac16 & 0\\[2pt]
    \frac12&\frac14+\frac1{12}\sqrt3&\frac14-\frac1{12}\sqrt3&&\frac16 & \frac13 & 0\\[2pt]
    1 & \frac12+\frac1{12}\sqrt3&\frac12-\frac1{12}\sqrt{3}&&\frac16 & \frac56 & 0\\[2pt]
    \hline
    & \frac12 & \frac12&& \frac16 &\frac23 & \frac16
  \end{array}
\end{displaymath}
The methods in \S~\ref{sec:coll}-\ref{sec:interp} all have order 4,
as it can be checked directly from the order conditions in Section~\ref{sec:order}.

\subsection{Consistency and simplifying conditions for transfer
  matrices by collocation and interpolation}
In this section we assume the diagonal methods $(\AA{l,l}, \bb{l},
\cc{l})$, $l=1,\ldots, N$, given, and study how
the construction of the transfer matrices, by collocation or
interpolation, contributes to the simplifying assumptions.

\begin{prop}
  Assume that the diagonal methods are consistent. 
 Then the matrices $\AA{l,m}$ in \R{eq:coll} and \R{eq:interp} satisfy
 $\AA{l,m}\oneslm{m} = \cc{l}$, as 
 \begin{equation}
   \label{eq:lincomb}
  \AA{l,m}=\tau \AA{l,m}_{\mathrm{interp}} + (1-\tau)
  \AA{l,m}_{\mathrm{colloc}}.
\end{equation}
\end{prop}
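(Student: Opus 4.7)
The plan is to verify the condition $\AA{l,m}\oneslm{m} = \cc{l}$ separately for the collocation construction \R{eq:coll} and the interpolation construction \R{eq:interp}, then conclude the affine combination \R{eq:lincomb} inherits the property because any affine combination of $\cc{l}$ with itself equals $\cc{l}$.

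For the collocation matrix I would compute the $i$-th row sum by swapping the finite sum with the integral:
\begin{displaymath}
  (\AA{l,m}_{\mathrm{colloc}}\oneslm{m})_i
  = \sum_{j=1}^{\myss{m}} \int_0^{\cc{l}_i}\LL_j^{\{m\}}(\tau)\D\tau
  = \int_0^{\cc{l}_i}\Bigl(\sum_{j=1}^{\myss{m}}\LL_j^{\{m\}}(\tau)\Bigr)\D\tau
  = \int_0^{\cc{l}_i} 1\,\D\tau = \cc{l}_i,
\end{displaymath}
using the partition-of-unity identity $\sum_{j}\LL_j^{\{m\}}(\tau)\equiv 1$ for the Lagrange basis associated to the (distinct) nodes $\cc{m}_j$.

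For the interpolation matrix I would invoke the assumed consistency of the diagonal method, $\AA{m,m}\oneslm{m} = \cc{m}$, to get
\begin{displaymath}
  \AA{l,m}_{\mathrm{interp}}\oneslm{m}
  = \LL^{\{m\}}(\cc{l})\,\AA{m,m}\oneslm{m}
  = \LL^{\{m\}}(\cc{l})\,\cc{m}.
\end{displaymath}
The vector on the right has $i$-th entry $\sum_j \LL_j^{\{m\}}(\cc{l}_i)\,\cc{m}_j$, which is the value at $\cc{l}_i$ of the Lagrange interpolant through the data pairs $(\cc{m}_j,\cc{m}_j)$. Since this data is interpolated exactly by the polynomial $p(t)=t$ (a polynomial of degree $1 \le \myss{m}-1$, assuming $\myss{m}\geq 2$; the case $\myss{m}=1$ is trivial), uniqueness of polynomial interpolation gives $\LL^{\{m\}}(\cc{l})\,\cc{m} = \cc{l}$.

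Finally, given both constructions satisfy the consistency condition, for any $\tau \in \RR$,
\begin{displaymath}
  \bigl(\tau\AA{l,m}_{\mathrm{interp}} + (1-\tau)\AA{l,m}_{\mathrm{colloc}}\bigr)\oneslm{m}
  = \tau\cc{l} + (1-\tau)\cc{l} = \cc{l},
\end{displaymath}
completing the argument. There is no substantive obstacle; the two non-trivial ingredients are the partition-of-unity property of the Lagrange basis (for collocation) and the exact reproduction of linear polynomials by Lagrange interpolation (for interpolation), both of which are standard.
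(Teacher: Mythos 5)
Your proof is correct and follows essentially the same route as the paper: partition of unity of the Lagrange basis for the collocation matrix, and consistency of the diagonal method plus exact reproduction of the identity function by the Lagrange interpolant for the interpolation matrix. The only addition is your explicit final step about the affine combination, which the paper leaves implicit.
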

\begin{proof}
For the collocation-coefficients: taking the $i$th component, $\sum_{j} \aa{l,m}_{i,j} = \sum_j
\int_0^{\cc{l}_i}\LL_j^{\{m\}} (\tau) d \tau=\cc{l}_i$, by exchanging the order
of sum and integration and using the fact that $\sum_{j}\LL^{\{m\}}_j(t) = 1$. For the
interpolation-type coefficients, $\AA{l,m} \oneslm{m} =
\LL^{\{m\}}(\cc{l}) \AA{m,m} \oneslm{m} = \LL^{\{m\}}(\cc{l}) \cc{m}$,
because of the assumption $\AA{m,m}\ones^{\{m\}}
= \cc{m}$. The last passage, $\LL^{\{m\}}(\cc{l}) \cc{m}=\cc{l}$
follows from the the fact that $\sum_{i=1}^m\LL^{\{m\}}_i(t) \cc{m}_i =t$ (identity
function), hence evaluations in the $\cc{l}$ does the trick.
\end{proof}
The consequence of the above result is that any choice
\R{eq:lincomb}, with transfer coefficients constructed as in this section,
will give a consistent GARK method.

In \cite{zanna19sps} we developed some results specific to the case
with $V^{\{1\}}(q,p) =p$, $V^{\{2\}}(q,p) =0$,
$F^{\{1\}}=F^{\{1\}}(q)$,  $F^{\{2\}}= F^{\{2\}}(q)$.
The following results generalize those findings with the notation of
this paper.

\begin{lemma}
\label{le:stageorder}
   Assume that $\AA{l,m}$ ($m\not=l$) is constructed either by
   interpolation or collocation.  Then 
   \begin{equation}
     \label{eq:4}
     C^{\{l,m\}}(\etalm{l,m}): \qquad \sum_{j=1}^{\myss{m}} \aa{l,m}_{i,j} (\cc{m}_j)^{k-1} = \frac{(\cc{l}_i)^k}{k},
     \qquad  i=1, \ldots, \myss{l},
   \end{equation}
   for $k = 1, \ldots, \min\{\etalm{m,m},\myss{m}-1\}$ in the
   interpolation setting and $k = 1, \ldots, \myss{m}-1$ in the
   collocation setting.
 \end{lemma}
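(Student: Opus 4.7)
The plan is to handle the collocation and interpolation cases separately, since in each case the condition $C^{\{l,m\}}$ reduces to a statement about exactness of Lagrange interpolation at the nodes $\cc{m}_j$.

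For the collocation choice \R{eq:coll}, I would substitute into the left-hand side of \R{eq:4} and interchange the finite sum with the integral, obtaining
\[
  \sum_{j=1}^{\myss{m}} \aa{l,m}_{i,j}\,(\cc{m}_j)^{k-1}
  = \int_0^{\cc{l}_i} \left( \sum_{j=1}^{\myss{m}} \LL_j^{\{m\}}(\tau)\,(\cc{m}_j)^{k-1} \right) \D \tau .
\]
The integrand is precisely the Lagrange interpolant, at the $\myss{m}$ distinct nodes $\cc{m}_j$, of the monomial $\tau^{k-1}$; since interpolation at $\myss{m}$ nodes is exact for every polynomial of degree at most $\myss{m}-1$, it reproduces $\tau^{k-1}$ identically whenever $k-1\leq \myss{m}-1$. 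Integrating then yields $(\cc{l}_i)^k/k$, which establishes \R{eq:4} throughout the stated range $k=1,\ldots,\myss{m}-1$ (indeed up to $k=\myss{m}$).

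For the interpolation choice \R{eq:interp}, I would write $\aa{l,m}_{i,j} = \sum_{p=1}^{\myss{m}} \LL_p^{\{m\}}(\cc{l}_i)\,\aa{m,m}_{p,j}$ and swap the order of summation, giving
\[
  \sum_{j=1}^{\myss{m}} \aa{l,m}_{i,j}\,(\cc{m}_j)^{k-1}
  = \sum_{p=1}^{\myss{m}} \LL_p^{\{m\}}(\cc{l}_i) \left( \sum_{j=1}^{\myss{m}} \aa{m,m}_{p,j}\,(\cc{m}_j)^{k-1} \right).
\]
Invoking the hypothesis that the diagonal method satisfies $C^{\{m,m\}}(\etalm{m,m})$, the inner sum equals $(\cc{m}_p)^k/k$ for $k\leq \etalm{m,m}$. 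The whole right-hand side then becomes $(1/k)$ times the Lagrange interpolant of $\tau^k$ at the nodes $\cc{m}_p$ evaluated at $\tau=\cc{l}_i$, and this interpolant reproduces $(\cc{l}_i)^k$ provided $k\leq \myss{m}-1$. The two restrictions combine to give the stated range $k=1,\ldots,\min\{\etalm{m,m},\myss{m}-1\}$.

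No serious obstacle is expected: the whole argument rests on the elementary fact that a Lagrange interpolant at $\myss{m}$ distinct nodes reproduces every polynomial of degree at most $\myss{m}-1$. The only care needed is the bookkeeping of the two independent sources of the upper bound on $k$ in the interpolation case — the stage order $\etalm{m,m}$ of the underlying diagonal method on the one hand, and the degree of exactness of Lagrange interpolation at the $\myss{m}$ nodes on the other — which together produce the $\min$ appearing in the statement.
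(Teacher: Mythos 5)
Your proposal is correct and follows essentially the same argument as the paper: in the collocation case you exchange sum and integral and use exactness of Lagrange interpolation for $\tau^{k-1}$, and in the interpolation case you swap the summation order, invoke $C^{\{m,m\}}(\etalm{m,m})$ for the diagonal method, and then use exactness of the interpolant of $\tau^k$, arriving at the same $\min$ bound. The only (harmless) difference is your parenthetical observation that the collocation case actually holds up to $k=\myss{m}$, which is a slight sharpening of the stated range.
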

 \begin{proof}
   The proof is essentially the same as in \cite{zanna19sps}.  We simplify
   it and put it in the formalism of GARK methods.
   Consider the interpolation setting first.
   \begin{eqnarray*}
     \sum_{j=1}^{\myss{m}} \aa{l,m}_{i,j} (\cc{m}_j)^{k-1}  &=&
     \sum_{j=1}^{\myss{m}} \sum_{n=1}^{\myss{m}} \LL_n^{\{m\}}(\cc{l}_i)
     \aa{m,m}_{n,j} (\cc{m}_j)^{k-1} = \sum_{n=1}^{\myss{m}}
     \LL_n^{\{m\}}(\cc{l}_i)  \sum_{j=1}^{\myss{m}} \aa{m,m}_{n,j}
                                                              (\cc{m}_j)^{k-1}\\
     &=& \sum_{n=1}^{\myss{m}} \LL_n^{\{m\}}(\cc{l}_i)
         \frac{(\cc{m}_n)^k}{k} \qquad
         \hbox{from $C^{\{m,m\}}(\etalm{m,m})$,} \quad k=1,\ldots,
         \etalm{m,m}\\
     &=& \frac{(\cc{l}_i)^{k}}{k} \qquad k= \min\{\etalm{m,m}, \myss{m}-1\}
   \end{eqnarray*}
   where last passage follows from the fact that $\sum_{n=1}^{\myss{m}}
   \LL^{\{m\}}_n(t) f_n$ is the interpolating polynomial based on
   $\myss{m}$ nodes and is exact for polynomials up do degree $\myss{m}-1$.

   Next, for the collocation setting,
   \begin{eqnarray*}
     \sum_{j=1}^{\myss{m}} \aa{l,m}_{i,j} (\cc{m}_j)^{k-1} & =&
     \sum_{j=1}^{\myss{m}} \int_0^{\cc{l}_i} \LL^{\{m\}}_j(\tau)
     (\cc{m}_j)^{k-1} \D \tau = \int_0^{\cc{l}_i}   \sum_{j=1}^{\myss{m}}
     \LL^{\{m\}}_j(\tau)  (\cc{m}_j)^{k-1} \D \tau \\
      &=&  \int_0^{\cc{l}_i} \tau^{k-1} \D \tau
     = \frac{(\cc{l}_i)^k}k, \qquad k=1, \ldots, \myss{m}-1
   \end{eqnarray*}
    where the third passage follows from the fact that the unique
    interpolant to $(\cc{m}_i)^{k-1}$ with nodes $\cc{m}_i$ is the
    function $\tau^{k-1}$.
 \end{proof}
 \begin{coroll}
   Let $\AA{l,m}_{\mathrm{interp}}$ and $\AA{l,m}_{\mathrm{coll}}$ be
   the coefficients of the secondary method based on interpolation and
   collocation. Then the method
\begin{displaymath}
\AA{l,m}= \tau_1 \AA{l,m}_{\mathrm{interp}}+\tau_2
\AA{l,m}_{\mathrm{coll}}, \qquad \tau_1 + \tau_2=1,
\end{displaymath}
 satisfies $C^{\{l,m\}}(\etalm{l,m})$, where $\etalm{l,m}=
 \min\{\etalm{m,m}, \myss{m}-1\}$, for any $\tau \in \RR$.
\end{coroll}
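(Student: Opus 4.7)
The proof is essentially immediate from Lemma~\ref{le:stageorder} combined with linearity. The key observation is that the simplifying condition $\Clm{l,m}(\etalm{l,m})$ in~\R{eq:C} is a linear constraint on the entries $\aa{l,m}_{i,j}$: its left-hand side $\sum_{j=1}^{\myss{m}} \aa{l,m}_{i,j}(\cc{m}_j)^{k-1}$ is linear in the matrix, while its right-hand side $(\cc{l}_i)^k/k$ is independent of it. Affine combinations of solutions to such a constraint are again solutions, provided the combination weights sum to $1$.

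Concretely, I would substitute
\begin{displaymath}
\aa{l,m}_{i,j} = \tau_1\, \aa{l,m}_{\mathrm{interp},\, i,j} + \tau_2\, \aa{l,m}_{\mathrm{coll},\, i,j}
\end{displaymath}
into the left-hand side of~\R{eq:C}, split the sum using linearity, and obtain
\begin{displaymath}
\tau_1 \sum_{j=1}^{\myss{m}} \aa{l,m}_{\mathrm{interp},\, i,j}(\cc{m}_j)^{k-1} \;+\; \tau_2 \sum_{j=1}^{\myss{m}} \aa{l,m}_{\mathrm{coll},\, i,j}(\cc{m}_j)^{k-1}.
\end{displaymath}
By Lemma~\ref{le:stageorder}, each of the two inner sums equals $(\cc{l}_i)^k/k$ whenever $k$ lies in the admissible range for both constructions. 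The constraint $\tau_1+\tau_2=1$ then collapses the result back to $(\cc{l}_i)^k/k$, which is exactly $\Clm{l,m}$ at stage index $k$.

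The only point that needs attention — and it is not really an obstacle — is the range of $k$. The interpolation construction enforces $\Clm{l,m}$ for $k = 1,\ldots, \min\{\etalm{m,m}, \myss{m}-1\}$, whereas collocation enforces it for $k = 1,\ldots, \myss{m}-1$. Since the collocation range always contains the interpolation range, the joint admissible range is precisely $k = 1,\ldots, \min\{\etalm{m,m}, \myss{m}-1\}$, matching the value $\etalm{l,m}=\min\{\etalm{m,m}, \myss{m}-1\}$ stated in the corollary. Thus the result is simply the statement that the affine hull of the set of matrices satisfying $\Clm{l,m}(\etalm{l,m})$ is contained in that set, which is automatic from linearity and the normalization $\tau_1+\tau_2=1$.
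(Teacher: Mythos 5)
Your proposal is correct and is precisely the argument the paper intends: the corollary is left without an explicit proof because, as you observe, the condition $C^{\{l,m\}}$ is affine in the entries of $\AA{l,m}$, so the claim follows immediately from Lemma~\ref{le:stageorder} together with $\tau_1+\tau_2=1$ and the fact that the collocation range of $k$ contains the interpolation range.
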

There is no immediate way to extend the above results to the
symplectic conjugate method. However, if the conjugate method also satisfies $C^{\{l,m\}}
(\etalm{l,m})$, then so does $\AA{l,m}= \tau_1 \AA{l,m}_{\mathrm{interp}}+\tau_2
\AA{l,m}_{\mathrm{coll}}+\tau_3 \wAA{l,m}_{\mathrm{interp}}+\tau_4
\wAA{l,m}_{\mathrm{coll}}$ when $\tau_1 + \cdots+ \tau_4=1$.

The $D^{\{l,m\}}$ properties are more difficult to generalize
\cite{tanner18gark}. However, it is reasonable to expect that, with
both transfer matrix as in \S~\ref{sec:coll}-\ref{sec:interp}, the
resulting method will have order equal to the
minimum of the order of the diagonal methods. This is not obvious for
superconvergent diagonal methods (like Gauss-Legendre, Lobatto, etc.)
but the constructions in \S~\ref{sec:coll}-\ref{sec:interp} seem to
indicate that it is generally true. The same was observed for some
methods with transfer matrices constructed by collocation in \cite{tanner18gark}.

 \section{Special cases for $N=2$}
\label{sec:special-case}
 We consider two main special cases, the first one when
 \begin{displaymath}
   L(q,\dot q) = L^{\{1\}} (q, \dot q) +  L^{\{2\}} (q), \qquad H(q,p)
   =  H^{\{1\}} (q, p) +  H^{\{2\}} (q)
 \end{displaymath}
 that we call a \emph{semi-separable} case, and the fully
 \emph{separable} case
 \begin{displaymath}
    L(q,\dot q) = \frac12 \dot q^T \dot q -U(q), \qquad H(q, p) =
    \frac12 p^T p + U(q),
  \end{displaymath}
  for a potential function $U$. Some examples of the
  semi-separable case are mechanical systems in which the mass matrix
 depends on the generalized variable $q$, while $L^{\{2\}} (q)=-U(q)$, with $U$ a
 potential depending on the generalized variable $q$ only. Other
 example are the case of systems with \emph{holonomic} constraints or
 stiff systems \cite{jay98spark, tanner18gark, zanna19sps}. In the 
 separable case, we assume that the mass matrix is constant and
 invertible, and, without loss of generality, we set it equal to $I$.

 \subsection{The semi-separable case}
The variational derivation is the same as in \cite{zanna19sps}, see also
Section~\ref{sec:vari-deriv}. Let
\begin{displaymath}
   L(q,\dot q) = L^{\{1\}} (q, \dot q) +  L^{\{2\}} (q).
 \end{displaymath}
One has
\begin{eqnarray*}
   \PP{1}_j = \frac{\de L^{\{1\}}}{\de \dot q} (\QQ1_j,
     \dot{Q}^{\{1\}}_j) & &\dot{P}^{\{1\}}_j=  \frac{\de L^{\{1\}}}{\de q} (\QQ1_j,
     \dot{Q}^{\{1\}}_j)\nonumber \\
 \PP{2}_j = \frac{\de L^{\{2\}}}{\de \dot q} (\QQ2_j)=0 && \dot{P}^{\{2\}}_j=  \frac{\de L^{\{2\}}}{\de q} (\QQ2_j)\nonumber 
\end{eqnarray*}
The constraint condition reads
\begin{displaymath}
  \bb{1}_j \PP1_j = \bb1_j \lambda - h\sum_{i=1}^{\myss1} \bb1_i
  \aa{1,1}_{i,j} \dot{P}^{\{1\}}_j - h \sum_{i=1}^{\myss2} \bb2_i
  \aa{1,2}_{i,j} \dot{P}^{\{2\}}_j .
\end{displaymath}
As there is no $\PP2$, the above relation \emph{requires only} $\bb1_j$
to be nonzero, $j =1\ldots, \myss1$.
 The symplecticity conditions are
 \begin{eqnarray*}
   \wAA{1,i} = (\oneslm{1,i} - (B^{\{1\}})^{-1} (\AA{i,1})^T B^{\{i\}}
   , \qquad i=1,2.
 \end{eqnarray*}

Since $\PP2$ is not defined, the
$\dot q$ variables are recovered simply by inverting the Legendre
transform $\PP{1}_j = \frac{\de L^{\{1\}}}{\de \dot q}
(\QQ1_j,\dot{Q}^{\{1\}}_j) $, implying that the $q$ variable is only direct
function of $\PP1$, and therefore there is no need for $\AA{l,2}$, for
$l=1,2$. Eventually, the tables can be completed by choosing an
appropriate $\AA{2,2}$ compatible with the nodes $\cc2$ and the
weights $\bb2$. Such matrix $\AA{2,2}$ can also be useful in
constructing the $\AA{2,1}$ coefficients, if, for instance, one uses
the methods proposed in Section~\ref{sec:transfer}.
The method, in a partitioned GARK formalism reads
\begin{displaymath}
  Q: \qquad
  \begin{array}{c|c}
    \cc1& \AA{1,1} \\
    \cc2& \AA{2,1}\\ \hline & \bb1
  \end{array},
  \qquad  \qquad
  P : \begin{array}{c|cc}
        \cc1 & \wAA{1,1} & \wAA{1,2} \\ \hline
        &\bb1 & \bb2,
   \end{array}     
 \end{displaymath}
 where, in the Hamiltonian formalism introduced earlier,
 \begin{eqnarray*}
   V^{\{1\}}(\QQ1_j,\PP1_j) = \dot {Q}^{\{1\}}_j, &&  V^{\{2\}}=0 ,\\
   F^{\{1\}}(\QQ1_j,\PP1_j) = \dot {P}^{\{1\}}_j     &&  F^{\{2\}}(\QQ2_j) = \dot {P}^{\{2\}}_j ,
 \end{eqnarray*}
 the first line being obtained by inverting the Legendre transform for
 $\dot {Q}^{\{1\}}_j$.
 In the Hamiltonian formalism, the methods read
 \begin{eqnarray*}
   \QQ{1}_i &=& q_0 + h \sum_{j=1}^{\myss1} \aa{1,1}_{i,j}V^{\{1\}}(\QQ1_j, \PP1_j)\\
   \QQ{2}_i &=& q_0 + h \sum_{j=1}^{\myss1}
                \aa{2,1}_{i,j}V^{\{1\}}(\QQ1_j, \PP1_j)\\
   q_1 &=& q_0 + h \sum_{i=1}^{\myss1} \bb1_i V^{\{1\}}(\QQ1_i, \PP1_i) \\
   \PP1_i &=& p_0 + h \sum_{j=1}^{\myss1} \waa{1,1}_{i,j}
              F^{\{1\}}(\QQ1_j,\PP1_j) + h \sum_{j=1}^{\myss2} \waa{1,2}_{i,j}
              F^{\{2\}}(\QQ2_j) \\
   p_1 &=& p_0 + h \sum_{i=1}^{\myss1} \bb1_i  F^{\{1\}}(\QQ1_i,\PP1_i) + h \sum_{i=1}^{\myss2} \bb2_i  F^{\{2\}}(\QQ2_i)
 \end{eqnarray*}

\subsection{The fully separable case}
\label{sec:separable-case}
Proceding as above, 
\begin{eqnarray*}
   \PP{1}_j = \frac{\de L^{\{1\}}}{\de \dot q} (\dot{Q}^{\{1\}}_j) &
  &\dot{P}^{\{1\}}_j=  0 \nonumber \\
 \PP{2}_j = 0 && \dot{P}^{\{2\}}_j=  \frac{\de L^{\{2\}}}{\de q} (\QQ2_j).\nonumber 
\end{eqnarray*}
The constraint condition reads
\begin{displaymath}
  \bb{1}_j \PP1_j = \bb1_j \lambda - h \sum_{i=1}^{\myss2} \bb2_i
  \aa{1,2}_{i,j} \dot{P}^{\{2\}}_j ,
\end{displaymath}
where, as before, we require the $\bb1_j$ to be nonzero.
In the Hamiltonian setting, taking
\begin{displaymath}
  \dot Q^{\{1\}}_i= V^{\{1\}}(\PP1_i), \qquad \dot P^{\{2\}}_i= F^{\{2\}}(\QQ2_i),
\end{displaymath}
the system reads
\begin{eqnarray*}
  \QQ2_i &=& q_0 + h \sum_{j=1}^{\myss1}\aa{2,1}_{i,j}
             V^{\{1\}}(\PP1_j),\\
  q_1 &=& q_0 + h \sum_{i=1}^{\myss1} \bb1_i V^{\{1\}}(\PP1_i)\\
  \PP1_i &=& p_0 + h \sum_{j=1}^{\myss2} \waa{1,2}_{i,j}F^{\{2\}}(\QQ2_j)\\
  p_1 &=& p_0 + h \sum_{i=1}^{\myss2}  \bb2_i F^{\{2\}}(\QQ2_i).
\end{eqnarray*}
The scheme has tableau
\begin{displaymath}
  Q: \qquad
  \begin{array}{c|c}
    \cc2& \AA{2,1}\\ \hline & \bb1
  \end{array},
  \qquad  \qquad
  P : \begin{array}{c|c}
        \cc1 & \wAA{1,2} \\ \hline
        & \bb2
   \end{array}     
 \end{displaymath}
and the matrices need not be square, as in the usual
setting. For instance one could choose the symplectic method
\begin{equation}
  \label{eq:rectangular}
  Q: \quad
  \begin{array}{c|cc}
    0&0&0\\
    \frac12 & \frac14+\frac18 \sqrt3 & \frac14-\frac18 \sqrt3\\[2pt]
    1 & \frac12 & \frac12\\[2pt] \hline
     & \frac12 & \frac12
  \end{array} \qquad \qquad
  P:\quad
  \begin{array}{c|ccc}
    \frac12-\frac16\sqrt3&\frac16&\frac13-\frac16\sqrt3&0\\[2pt]
    \frac12+\frac16\sqrt3&\frac16&\frac13+\frac16\sqrt3&0\\[2pt]\hline
    & \frac16 & \frac23 & \frac16
  \end{array}
\end{equation}
see Section~\ref{sec:transfer} (collocation example).
We observe that it is allowed to use different weights $\bb1$ and
$\bb2$ for the $Q$ and the $P$ part respectively. 
The symplecticity condition is
\begin{eqnarray*}
   \wAA{1,2} = (\oneslm{1,2} - (B^{\{1\}})^{-1} (\AA{2,1})^T )B^{\{2\}},
\end{eqnarray*}
which coincides with the well known sufficient symplecticity condition for PRK
for the separable case,
\begin{displaymath}
  \bb1_i \bb2_j = \bb1_i \waa{1,2}_{i,j} + \bb2_j \aa{2,1}_{j,i},
\end{displaymath}
\cite{hairer06gni, sanz-serna94nhp, araujo97smb}.
Note, however, that there must be some form of compatibility as the weights of
the $q$-part must be compatible with the nodes of the $p$-part and
viceversa.
The rectangular scheme \R{eq:rectangular} is essentially implicit only
in the $Q_2$ stage only, it has order four, as it follows from our
error analysis, shares many of the benefits of Gauss-Legendre and
Gauss-Lobatto methods, in addition to being slightly computationally less
expensive than both the underlying methods methods.

\begin{figure}[th]
  \centering
  \includegraphics*[width=.65\textwidth]{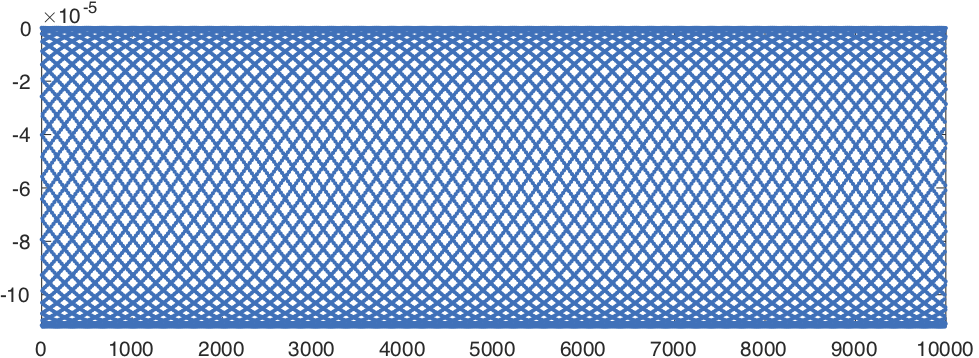} \quad
   \includegraphics*[width=.30\textwidth]{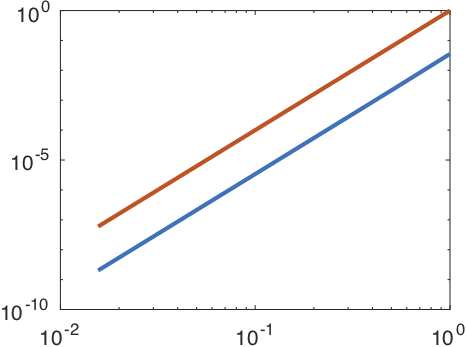}
  \caption{Error in the total energy (left) in $[0, 10^4]$ with
    stepsize $h=\frac12$ and  log-log order plot (right) for the
    unconventional method \R{eq:rectangular} applied to the harmonic oscillator. The red line is
    the reference line for order four. }
\end{figure}

In the general setting (non-separable) the weights used in the GARK
method for the $Q$ and $P$ part must be the same set of weights, as it
is clear from the variational derivation in Section~\ref{sec:vari-deriv}.

\section{Conclusions and further remarks}
\label{sec:concl}
We have presented a variational framework that allows for the
treatment of different Lagrangian terms by different RK methods. The
framework uses GARK methods and leads to symplectic partitioned
numerical methods that can be put in a GARK formalism. Further, we
have proposed two ways of constructing transfer coefficients
$\AA{l,m}$, which, in turn, allow for an infinite family of
coefficients satisfying the same order conditions.
The main idea is to use as diagonal methods one's favourite methods,
with good properties, and to derive transfer conditions that share
these properties and possibly other desirable properties, by tuning
the free parameters, see for instance \cite{zanna19sps}.

We have also discussed in more detail the special case $N=2$, for
semiseparable and separable problems. For the separable case, we have given an
example of an unconventional scheme, constructed from Gauss--Legendre
and Gauss--Lobatto,  with rectangular matrices. The method is
symplectic, fourth order, and essentially only implicit in one variable.

The following topics were not addressed in this paper, as they are
more natural in a Hamiltonian setting rather than Lagrangian setting. 
\begin{itemize}
\item The case of zero weights, as we have assumed $\bb{m}_i\not=0$,
  $i=1, \ldots, \myss{m}$, for all $m = 1, \ldots,N$. 
\item The case of redundant stages.  In several proofs we use the
  uniqueness and the order of Lagrange interpolation, and for this purpose we have assumed $\cc{m}_i\not =
  \cc{m}_j$, $i\not= j$, $i, j=1, \ldots, \myss{m}$, for all $m = 1,
  \ldots,N$. When some of the nodes coincide, extra conditions on
  derivatives might be required. 
\item The number of order conditions for an arbitrary symplectic partitioned GARK
  method for Hamiltonian vector fields.
  If it is desirable to derive \emph{all} the coefficients of a
  symplectic partitioned GARK method  for both the $q$ and $p$ variables imposing symplecticity of the vector
  field, there is a reduction in number of total order conditions due
  to the fact that the underlying vector field is not arbitrary, hence
  some of the order conditions fall out and need not be satisfied, see
  also  \cite{araujo97smb} .
\end{itemize}
These  three topics are more relevant in the context of the derivation
of new coefficients (also for the diagonal methods) directly from the order
conditions, rather than using one's favourite diagonal
methods, that already has some desirable properties, as it has been
the main focus of this paper.

\section*{Acknowledgements}
The author would like to thank the Isaac Newton Institute for
Mathematical Sciences, Cambridge, for support and hospitality during
the programme ``Geometry, Compatibility and Structure Preservation in
Computational Differential Equations'' where work on this paper was
undertaken. This work was supported by EPSRC grant no.\ EP/R014604/1.

\bibliographystyle{alpha}
\bibliography{SPGARK.bib}

\begin{thebibliography}{WOBL17}

\bibitem[AMSS97]{araujo97smb}
A.~L. Ara{\'u}jo, A.~Murua, and J.~M. Sanz-Serna.
\newblock Symplectic methods based on decompositions.
\newblock {\em SIAM J.\ Numer.\ Anal.}, 34(5):1926--1947, 1997.

\bibitem[Bor01]{bornemann01rkm}
F.~Bornemann.
\newblock {R}unge--{K}utta methods, trees and maple. on a simple proof of
  {B}utcher's {T}heorem and the automatic generation of {O}rder {C}onditions.
\newblock {\em Sel\c{c}uk Journal of Applied Mathematics}, 2(1), 2001.
\newblock
  http://acikerisimarsiv.selcuk.edu.tr:8080/xmlui/handle/123456789/3403.

\bibitem[CS83]{cooper83ark}
G.~J. Cooper and A.~Sayfy.
\newblock Additive {R}unge--{K}utta methods for stiff ordinary differential
  equations.
\newblock {\em Math.\ Comp.}, 40(161):207--2018, 1983.

\bibitem[EH06]{hairer06gni}
G.~Wanner E.~Hairer, C.~Lubich.
\newblock {\em Geometric Numerical Integrtion, Structure-Preserving Algorithms
  for Ordinary Differential Equations}.
\newblock Springer Series in Computational Mathematics. Springer, 2006.

\bibitem[Jay98]{jay98spark}
L.~Jay.
\newblock Structure preservation for constrained dynamics with super
  partitioned additive {R}unge--{K}utta methods.
\newblock {\em SIAM J.\ Sci.\ Comput.}, 20(2):416--446, 1998.

\bibitem[MW01]{marsden_west_2001}
J.~E. Marsden and M.~West.
\newblock Discrete mechanics and variational integrators.
\newblock {\em Acta Numerica}, 10:357–514, 2001.

\bibitem[OBJM11]{ober-blobaum11dma}
S.~Ober-Bl{\"o}baum, O.~Junge, and J.~E. Marsden.
\newblock Discrete mechanics and optimal control: An analysis.
\newblock {\em ESAIM: COCV}, 17(2):322--352, 2011.

\bibitem[SG15]{sandu15gark}
A.~Sandu and M.~G{\"u}nther.
\newblock A generalized-structure approach to additive {R}unge--{K}utta
  methods.
\newblock {\em SIAM J.\ Num.\ Anal.}, 53(1):17--42, 2015.

\bibitem[SSC94]{sanz-serna94nhp}
J.~M. Sanz-Serna and M.~P. Calvo.
\newblock {\em Numerical {H}amiltonian {P}roblems}.
\newblock AMMC 7. Chapman {\&} Hall, 1994.

\bibitem[Tan18]{tanner18gark}
G.~M. Tanner.
\newblock {\em Generalized additive {R}unge--{K}utta methods for stiff {ODE}s}.
\newblock PhD thesis, University of Iowa, 2018.

\bibitem[WOBL16]{wenger2016vio}
T.~Wenger, S.~Ober-Bl{\"o}baum, and S.~Leyendecker.
\newblock Variational integrators of mixed order for dynamical systems with
  multiple time scales and split potentials.
\newblock In G.~Stefanou M.~Papadrakakis, V.~Papadopoulos and V.~Plevris,
  editors, {\em ECCOMAS Congress 2016}, 2016.

\bibitem[WOBL17]{wenger2017caa}
T.~Wenger, S.~Ober-Bl{\"o}baum, and S.~Leyendecker.
\newblock Construction and analysis of higher order variational integrators for
  dynamical systems with holonomic constraints.
\newblock {\em Advances in Computational Mathematics}, 43(5):1163--1195, Oct
  2017.

\bibitem[Zan]{zanna19sps}
A.~Zanna.
\newblock Symplectic {P}-stable additive {R}unge--{K}utta methods.
\newblock arXiv:1909.11017.

\end{thebibliography}

\end{document}